%% file: free5.tex
\documentclass[a4paper]{article}

\usepackage{graphicx}
\usepackage{color}
\usepackage{stmaryrd}

\usepackage{tikz}
\usepackage{tikz-cd}

\usepackage[numbers]{natbib}
\definecolor{mylinkcolor}{rgb}{0.05,0.05,0.4}
\usepackage[linktocpage]{hyperref}
\hypersetup{colorlinks,
linkcolor=mylinkcolor,
citecolor=mylinkcolor,
urlcolor=mylinkcolor}

\input{mac}

\input{thm}

\allowdisplaybreaks

\title{Properties of theories whose algebras are free}
\author{Tom Leinster%
\thanks{Maxwell Institute and School of Mathematics, University of
Edinburgh, Scotland; Tom.Leinster@ed.ac.uk}
\qquad
Maia Woolf\,%
\thanks{Maxwell Institute and School of Mathematics, University of
Edinburgh, Scotland; Maia.Woolf@ed.ac.uk}%
}
\date{}

\begin{document}

\sloppy
\maketitle

\begin{abstract}
A hard theorem of Givant classifies the finitary algebraic theories for
which every algebra is free. From it, one easily deduces that all such
theories are minimal (adding any further equations makes them trivial) and
have a well-defined notion of dimension (free algebras on sets of different
cardinality are never isomorphic). We prove both these properties directly,
without the aid of Givant's theorem.
\end{abstract}

\section{Introduction}
\label{sec:intro}

Which algebraic theories have the property that all their algebras are
free? In the 1970s, Steven Givant classified them all~(\cite{Giva},
p.~2). Roughly speaking, the only really nontrivial such theories are the
theory of vector spaces and some variants of it. (A precise statement is
given below.) And in the 2010s, Keith Kearnes, Emil Kiss and \'Agnes
Szendrei strengthened Givant's theorem, arriving at the same conclusion
with the weaker hypothesis that every \emph{finitely generated} algebra is
free~\cite{KKS}. Both proofs are substantial, in the latter case depending
on a long chain of previous theorems in universal algebra.

Once the classification theorem is proved (in either of its variants), one
can immediately read off certain further properties of theories whose
algebras are all free. But given the difficulty of the classification
theorem, it is desirable to find direct proofs of these properties too. 

Here we give direct proofs of two particular properties. Without appealing
to the classification theorem, we show that theories whose algebras are all
free are `minimal' and have a well-defined notion of dimension, in a sense
to be explained.

We now describe the results in more detail. By a \demph{theory} we mean a
finitary algebraic theory, and by an \demph{algebra} for a theory we mean
an algebra or model in the category of sets. We call a theory
\demph{totally free} if all its algebras are free.

The theory of vector spaces over a field $k$ is totally free, and the
same is true if $k$ is merely a division ring (skew field). The theory of
affine spaces (`vector spaces without origin') over a division ring is also
totally free. There are also four rather trivial totally free
theories, shown in Table~\ref{table:six}. Givant's theorem is that these
are the only totally free theories. (The top row of
Table~\ref{table:six} is invisible in both~\cite{Giva} and~\cite{KKS}, as
they follow the logicians' tradition of excluding trivial cases.)

\begin{table}
\centering
\begin{tabular}{l@{\hspace*{2em}}l}
Sets with at most one element   &
Sets with exactly one element   \\[1ex]
Sets                            &
Pointed sets                    \\[1ex]
Affine spaces over a division ring             &
Vector spaces over a division ring             
\end{tabular}
\label{table:six}
\caption{The totally free theories}
\end{table}

Some structure is apparent in Table~\ref{table:six}. In each row, the
right-hand column is the pointed version of the left-hand column: a set
with exactly one element is a pointed set with at most one element, and a
vector space is a pointed affine space (an affine space with a
distinguished element). This pointed/unpointed pattern is investigated in
Woolf's master's thesis~\cite{WoolATA}.

Here we prove, without the aid of the classification theorem, that
totally free theories are minimal (Section~\ref{sec:min}) and have a
well-defined notion of dimension (Section~\ref{sec:dim}). We now explain
briefly what these two results mean.

Informally, a nontrivial theory is \emph{minimal} if adjoining any new
equational law to it makes it trivial. (The formal definition is given in
Section~\ref{sec:min}.) For example, the theory of groups is \emph{not} minimal,
because adjoining the law $xy = yx$ produces the theory of abelian groups,
which is different from the theory of groups but not trivial. 

On the other hand, the theory of vector spaces is minimal, because
adjoining a law such as $x - y = 3x + 8y$ results in the trivial theory: a
vector space $V$ in which this equation holds for all $x, y \in V$ must be
trivial. This property of vector spaces is closely related to the fact that
the canonical map $V \to V^{**}$ to the double dual is injective
(Example~\ref{eg:min-vect}).

Our first main theorem is that totally free monads are minimal. In fact,
our result holds even for non-finitary monads, so it cannot quite be
deduced from the classification theorem.

Our second result on totally free theories is that if the free
algebras $F(X)$ and $F(Y)$ on sets $X$ and $Y$ are isomorphic, then $X$ and
$Y$ themselves are isomorphic. Thus, the dimension of an algebra $A$ can be
defined as the cardinality of the set $X$ such that $A \iso F(X)$. The
isomorphism classes of algebras therefore correspond one-to-one with the
isomorphism classes of sets, just as for vector spaces.

In general, theories do not have well-defined dimension in this sense. For
example, there are rings over which the free modules on $1, 2, \ldots$
generators are all isomorphic (Theorem~2D of~\cite{Ever}).

Categorically, a theory is totally free just when the free functor
$F$ from sets to algebras is essentially surjective on objects. For
dimension to be well-defined means that $F$ is essentially injective on
objects. Thus, essential surjectivity implies essential injectivity.

Although most theories are not minimal and do not have a notion of
dimension, these two properties are not strong enough (even together) to imply
that a theory is totally free. For instance, the theory of
semilattices is not totally free, but has both the minimality and
dimension properties (Examples~\ref{eg:min-semi} and~\ref{eg:dim-semi}).

\paragraph*{Conventions} We take a categorical approach, freely using the
correspondence between finitary algebraic theories and finitary monads on
$\Set$. Monads on $\Set$ (or their underlying endofunctors) are typically
denoted by $T$. 

The category of $T$-algebras is written as $\Set^T$, and the free functor
as $F \from \Set \to \Set^T$. The elements $\theta \in T(X)$ can be
understood as operations of arity $X$, for any set $X$. Correspondingly,
given a $T$-algebra $(A, T(A) \toby{\alpha} A)$, there is an induced map
\[
\theta_A \from A^X \to A.
\]
Indeed, for an $X$-indexed family $\family{a} = (a_x)_{x \in X} \in A^X$, we
can view $\family{a}$ as a function $X \to A$ and apply adjointness to
produce a homomorphism $\overline{\family{a}} \from F(X) \to (A, \alpha)$,
which is $\theta \mapsto \theta_A(\family{a})$. Assembling the functions
\[
\begin{array}{ccc}
T(X)    &\to            &A^{A^X}        \\
\theta  &\mapsto        &\theta_A
\end{array}
\]
($X \in \Set$), one shows that a $T$-algebra structure on a set $A$
amounts to a map of monads $T \to \End(A)$. Here $\End(A)$ is the
endomorphism monad on $A$, defined by $X \mapsto A^{A^X}$~\cite{KockDDM}. 

For a natural number $n$, we often use $n$ to also denote an $n$-element
set. For example, $F(n)$ means the free algebra on $n$ generators.

An algebra for a monad on $\Set$ is \demph{trivial} if it has at most one
element. A monad on $\Set$ is \demph{trivial} if all its algebras are
trivial. There are only two trivial monads. One is
\[
X \mapsto
\begin{cases}
\emptyset       &\text{if } X \text{ is empty},\\
1               &\text{otherwise}
\end{cases}
\]
($X \in \Set$). Its algebras are $\emptyset$ and the one-element set $1$,
and it corresponds to the theory presented by no operations and the single
equation $x = y$. The other is $X \mapsto 1$ for all sets $X$. Its only
algebra is $1$, and it corresponds to the theory presented by a single
constant and the equation $x = y$. These two trivial theories make up
the top row of Table~\ref{table:six}.

A monad $T$ on $\Set$ is nontrivial if and only if the unit map $X \to
T(X)$ is injective for every set $X$ (e.g.\ Exercise~2.3.11
of~\cite{BCT}). A further equivalent condition is that the free functor $F$
is faithful.

\paragraph*{Acknowledgements} Thanks to John Bourke, Steve Lack, Stefan
Milius and Lurdes Sousa for helpful comments. TL is grateful for the
generous donations by fellow members of the University and College Union
that paid his salary during a prolonged industrial dispute at the
University of Edinburgh, when this paper was written.

\section{Minimality}
\label{sec:min}

Here we define minimality, prove that every totally free theory is
minimal (unless trivial), then give further results and examples that
illustrate the significance of this theorem.

We will need a standard fact about monads on $\Set$ (Lemma~3.3.1
of~\cite{MaMu}). 

\begin{lemma}
\label{lemma:monics}
For every monad $T$ on $\Set$, both $T$ and the free functor $F$ preserve
monics. 
\end{lemma}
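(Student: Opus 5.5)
The plan is to use the fact that in $\Set$ every monic with nonempty domain is split, and to treat the case of an empty domain separately. Fix a monic $m \from X \to Y$ in $\Set$.

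\emph{Case $X$ nonempty.} Choose a retraction $r \from Y \to X$ with $r \circ m = 1_X$; this is possible because $X \neq \emptyset$. Applying $T$ gives $T(r) \circ T(m) = 1_{T(X)}$, so $T(m)$ is split monic and in particular monic. Likewise $F(r) \circ F(m) = 1_{F(X)}$ in $\Set^T$, so $F(m)$ is split monic in $\Set^T$, hence monic there.

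\emph{Case $X = \emptyset$.} This is the main obstacle: $m$ is the unique map $\emptyset \to Y$ and need not split, so there is no retraction to apply $T$ to. We need to show that $T(\emptyset) \to T(Y)$ is injective. If $Y = \emptyset$ the map is an identity, so assume $Y \neq \emptyset$. The idea is to build a $T$-algebra that separates constants. Consider the set $T(\emptyset)$. If $T(\emptyset)$ is empty there is nothing to prove. Otherwise $T(\emptyset)$ is the underlying set of the free algebra $F(\emptyset)$, which is the initial algebra, and it is nonempty. Choose an element $c \in T(\emptyset)$, and define $f \from Y \to T(\emptyset)$ as the constant map with value $c$. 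By freeness, $f$ extends to a homomorphism $\bar f \from F(Y) \to F(\emptyset)$. The composite $\bar f \circ F(m) \from F(\emptyset) \to F(\emptyset)$ is an endomorphism of the initial algebra, so it is the identity. Thus $F(m)$ has a retraction after all, so both $F(m)$ and $T(m) = U F(m)$ are (split) injective.

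Finally, note that a homomorphism of $T$-algebras whose underlying function is injective is monic in $\Set^T$, because the forgetful functor $U$ is faithful. Alternatively, in each case we have exhibited an explicit retraction. Combining the two cases shows that both $T$ and $F$ preserve monics. The only step needing care is the empty-domain case, where the retraction must be built in $\Set^T$ using initiality of $F(\emptyset)$ rather than in $\Set$.
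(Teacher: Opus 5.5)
Your proof is correct and complete. The paper gives no argument of its own for this lemma; it simply cites Lemma~3.3.1 of~\cite{MaMu}. What you have written is the standard proof, so there is nothing in the paper to set it against.

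The structure is right. Monics with nonempty domain split, so any functor preserves them. For empty domain the monad structure is genuinely needed, and your retraction $U\bar f = \mu_\emptyset \circ T(f)$, obtained from initiality of $F(\emptyset)$, is the right device.

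One small inaccuracy: your closing claim that an explicit retraction was exhibited ``in each case'' is not quite true. When $T(\emptyset) = \emptyset \neq T(Y)$, the map $F(m)$ cannot split, since there is no function $T(Y) \to \emptyset$. Your remark that $U$ is faithful does cover that subcase, so nothing is lost.
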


We assume throughout that $\Set$ satisfies the axiom of choice, otherwise
even the theory of vector spaces is not totally free. Thus, epics in
$\Set$ split, so $T$ and $F$ preserve epics too.

To define minimality, we first need to define quotients.  

Let $T$ be a monad on $\Set$. A \demph{quotient} of $T$ is another monad
$S$ on $\Set$ together with a map of monads $T \toby{\pi} S$ such that for
every set $X$, the function $\pi_X \from T(X) \to S(X)$ is
surjective. (Equivalently, $\pi$ is epic as a natural transformation.)
Syntactically, the theory $S$ is the theory $T$ together with some further
equational laws in the operations of $T$. Typical example: $T$ is the monad
for groups and $S$ is the monad for abelian groups.

We take the usual liberty of also referring to $S$ alone as a quotient of
$T$. We call $\pi$ a \demph{quotient map}, and denote quotient maps by
arrows $\epic$. 

\begin{example}
\label{eg:qt-End}
As noted in Section~\ref{sec:intro}, a $T$-algebra amounts to a set $A$
together with a map of monads $T \to \End(A)$. Any map of monads on $\Set$
factorizes uniquely as a quotient map followed by one that is componentwise
injective, in this case giving
\begin{equation}
\label{eq:qe-fact}
T \epic T_A \monic \End(A).
\end{equation}
(Showing that the standard factorization system on $[\Set, \Set]$ lifts to
the category of monads is straightforward, using the fact that monads on
$\Set$ preserve epics. Well known as this result seems to be, we have been
unable to find an explicit statement in the literature.) Here $T_A$ is the
\demph{theory of $A$}, which syntactically consists of the original theory
$T$ together with all equational laws satisfied in the algebra $A$:
operations $\theta, \theta' \in T(X)$ are identified in $T_A$ if and only
if $\theta_A = \theta'_A$.
\end{example}

Any map $T \toby{\pi} S$ of monads on $\Set$ induces a faithful functor
\[
\pi^*\from      \Set^S \to    \Set^T.
\]
If an $S$-algebra is regarded as a set $B$ together with a monad map $S
\to \End(B)$, then $\pi^*$ is simply composition with $\pi$. 

Assuming now that $\pi$ is a quotient map, $\pi^*$ is also full and
strictly injective on objects. This allows us to view $\Set^S$ as a full
subcategory of $\Set^T$, and to ask whether a $T$-algebra `is' an
$S$-algebra.

We can tell whether a quotient map $T \epic S$ is an isomorphism by
comparing the algebras for $T$ and $S$:

\begin{lemma}
\label{lemma:qt-iso}
Let $\pi \from T \epic S$ be a quotient map between monads on $\Set$. If
every $T$-algebra is an $S$-algebra then $\pi$ is an isomorphism.
\end{lemma}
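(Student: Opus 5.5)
Since each $\pi_X$ is already surjective, it suffices to show that each $\pi_X \from T(X) \to S(X)$ is injective. We then have a componentwise bijective monad map, hence an isomorphism of monads, because the inverse of a natural transformation whose components are bijections is automatically natural and a map of monads.

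To prove injectivity, we test on free algebras. Fix a set $X$ and consider the free $T$-algebra $F(X) = (T(X), \mu_X)$. By hypothesis it is an $S$-algebra, meaning that the monad map $T \to \End(T(X))$ corresponding to it factors through $\pi$ as $T \toby{\pi} S \to \End(T(X))$. Now let $\theta, \theta' \in T(X)$ with $\pi_X(\theta) = \pi_X(\theta')$. Then
\[
\theta_{F(X)} = \theta'_{F(X)} \from T(X)^X \to T(X),
\]
since each of these maps is determined by the image of the operation in $S(X)$. Evaluate both at the family of generators $\eta_X \from X \to T(X)$, that is, $\family{a} = (\eta_X(x))_{x \in X}$. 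The induced homomorphism $\overline{\family{a}} \from F(X) \to F(X)$ is the identity, so $\theta_{F(X)}(\family{a}) = \theta$, and likewise $\theta'_{F(X)}(\family{a}) = \theta'$. Therefore $\theta = \theta'$.

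The only point needing care is the identification of operations. The action of $\theta$ on an $S$-algebra regarded as a $T$-algebra via $\pi^*$ must equal the action of $\pi_X(\theta)$ in the $S$-structure. This follows from the description of $\pi^*$ as composition with $\pi$: the map $T(X) \to A^{A^X}$ factors as $\pi_X$ followed by the corresponding component for $S$. With this in hand the argument is complete, and the main idea is simply that the free algebras detect all equations.
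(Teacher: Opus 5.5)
Your proof is correct, but it takes a different route from the paper.

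The paper argues globally. The hypothesis makes $\pi^* \from \Set^S \to \Set^T$ surjective on objects. Since $\pi^*$ is in any case full, faithful and injective on objects (which uses surjectivity of the components $\pi_X$), it is an isomorphism of categories over $\Set$. The paper then cites Street's theorem that $T \mapsto (\Set^T \to \Set)$ is a full and faithful functor from monads to $\CAT/\Set$, so it reflects isomorphisms, and this isomorphism transfers back to $\pi$.

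You instead work elementwise and test only on free algebras. The unit law gives $\theta_{F(X)}(\eta_X) = \mu_X(T(\eta_X)(\theta)) = \theta$, so the free algebra on $X$ separates all operations of arity $X$, and $\pi_X$ is injective. In pointfree form: if $\beta_X$ is the $S$-structure on $T(X)$ with $\beta_X \circ \pi_{T(X)} = \mu_X$, then naturality of $\pi$ gives
\[
\beta_X \circ S(\eta_X) \circ \pi_X = \beta_X \circ \pi_{T(X)} \circ T(\eta_X) = \mu_X \circ T(\eta_X) = 1,
\]
so each $\pi_X$ is split monic.

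What your route buys: it is self-contained and avoids the external theorem. It also proves more. You only use that the free $T$-algebras are $S$-algebras, and surjectivity of $\pi$ enters only in the final step from injective to bijective. In effect you have unwound the special case of Street's theorem needed here, since that theorem also recovers a monad map from the induced functor by evaluating on free algebras.

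What the paper's route buys: it is shorter given the citation, and more conceptual, since it frames the lemma as an instance of "a monad is determined by its category of algebras over $\Set$".
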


\begin{proof}
Assuming that every $T$-algebra is an $S$-algebra, the inclusion $\pi^*
\from \Set^S \to \Set^T$ is strictly surjective on objects. It is in
any case full, faithful and strictly injective on objects, so it is an
isomorphism. But the functor
\[
\begin{array}{ccc}
(\text{monads on } \Set)^\op    &\to            &\CAT/\Set      \\
T                               &\mapsto        &(\Set^T \toby{U} \Set) \\
\rho                            &\mapsto        &\rho^*
\end{array}
\]
is full and faithful (by Theorem~6 of~\cite{StreFTM}) and therefore
reflects isomorphisms. Hence $\pi$ is an isomorphism.
\end{proof}

Birkhoff's HSP theorem characterizes, for a given $T$, the full
subcategories of $\Set^T$ that arise from quotients of $T$ (Theorem~3.3.4
of~\cite{ManeAT}, for instance). We will need the following fragment.

\begin{lemma}
\label{lemma:sp}
Let $S$ be a quotient of a monad $T$ on $\Set$. Then the full subcategory
$\Set^S$ of $\Set^T$ is closed under subalgebras and arbitrary products.
\end{lemma}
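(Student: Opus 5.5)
The plan is to work with the description of algebras as monad maps into endomorphism monads. An $S$-algebra is then a $T$-algebra $(A, T \to \End(A))$ whose corresponding monad map factors through $\pi \from T \epic S$. Since $\pi$ is componentwise surjective, such a factorization is unique if it exists. It exists precisely when, for every set $X$ and all $\theta, \theta' \in T(X)$ with $\pi_X(\theta) = \pi_X(\theta')$, we have $\theta_A = \theta'_A$. The first step is to justify this criterion. Given the condition, define $S(X) \to A^{A^X}$ by choosing preimages; this is well defined by the condition. Naturality and compatibility with unit and multiplication then follow from those of $T \to \End(A)$, because $\pi$ is a monad map and $\pi_X$ and $\pi_{TX}$ are epic. (Using that $T$ and $S$ preserve epics, $(\pi \circ \pi)_X = \pi_{SX} \circ T\pi_X$ is also epic, which is what is needed for the multiplication square.) This is routine diagram chasing. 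With the criterion in hand, $\Set^S$ is the class of $T$-algebras satisfying the equations $\theta = \theta'$ for all pairs identified by $\pi$.

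\textbf{Subalgebras.} Let $B \subseteq A$ be a $T$-subalgebra of an $S$-algebra $A$, with inclusion $i$ a $T$-homomorphism. For $\family{b} \in B^X$, naturality of the operations under homomorphisms gives $i(\theta_B(\family{b})) = \theta_A(i \circ \family{b})$. This can be checked directly from $i\alpha_B = \alpha_A T(i)$, or by noting that $\overline{\family{b}}$ composed with $i$ is $\overline{i\family{b}}$. So if $\theta_A = \theta'_A$, then $i\theta_B(\family{b}) = i\theta'_B(\family{b})$, and injectivity of $i$ gives $\theta_B = \theta'_B$. Hence $B$ satisfies the equations and is an $S$-algebra.

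\textbf{Products.} Let $(A_j)_{j \in J}$ be $S$-algebras and $P = \prod_j A_j$ their product in $\Set^T$. This product is created by the forgetful functor, so its carrier is the set-theoretic product and the projections $p_j$ are $T$-homomorphisms. For $\family{a} \in P^X$, the same naturality gives $p_j(\theta_P(\family{a})) = \theta_{A_j}(p_j \circ \family{a})$. The projections are jointly injective, so $\theta_P = \theta'_P$ whenever $\theta_{A_j} = \theta'_{A_j}$ for all $j$. Hence $P$ is an $S$-algebra. The empty product, the one-point algebra, is included, since it trivially satisfies every equation.

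The only step with real content is the factorization criterion in the first paragraph, specifically checking that the induced map $S \to \End(A)$ respects multiplication. I would handle it by precomposing with the epimorphism $T T X \to S S X$ described above and using that $T \to \End(A)$ is a monad map. An alternative is to invoke the quotient–mono factorization~\eqref{eq:qe-fact}: $A$ is an $S$-algebra if and only if $\ker \pi \subseteq \ker(T \to \End(A))$, which is the same check.
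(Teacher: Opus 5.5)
Your proof is correct, and it is more self-contained than the paper's. The paper only remarks that the lemma is clear syntactically for finitary monads, since equational laws satisfied by an algebra pass to its subalgebras and products, and it defers the general case to Theorem~4.1 of~\cite{BarrHSP}.

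You use the same `equations are inherited' idea, but first you make `satisfying the extra equations of $S$' precise for an arbitrary monad. A $T$-algebra $A$ lies in $\Set^S$ exactly when $\ker \pi_X \subseteq \ker(\theta \mapsto \theta_A)$ for every set $X$. You prove this via the correspondence with monad maps into $\End(A)$, checking the multiplication axiom after precomposing with the epimorphism $\pi_{SX} \circ T\pi_X$. That step uses the axiom of choice, which the paper assumes anyway. After this, subalgebras and products are handled by injectivity and joint injectivity of homomorphisms, with no finitarity assumption anywhere.

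What your route buys is a proof of the non-finitary case without citation. That is exactly the generality Theorem~\ref{thm:min} needs, since it is claimed for arbitrary monads. The price is the routine diagram chase, which the paper avoids by appealing to Barr. You could shorten the chase slightly by working with Eilenberg--Moore structures directly: it suffices that $\ker \pi_A \subseteq \ker \alpha$, and the same precomposition trick shows that the induced $\beta \colon SA \to A$ satisfies the $S$-algebra axioms.
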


`Closed under subalgebras' means that any sub-$T$-algebra of an $S$-algebra
is an $S$-algebra.

\begin{proof}
This is clear syntactically, at least in the finitary case; for instance,
if a $T$-algebra satisfies the further equational laws required in order to
be an $S$-algebra, then so does every subalgebra. A proof for general
monads on $\Set$ can be found as part of Theorem~4.1 of~\cite{BarrHSP}. 
\end{proof}

A quotient map is \demph{proper} if it is not an isomorphism.  A monad on
$\Set$ is \demph{minimal} if it is nontrivial and has no proper
quotients. 

\begin{remark}
\label{rmk:qt-fin}
A quotient of a finitary monad is finitary. In terms of theories, this is
clear: when further equations are adjoined to a finitary algebraic theory,
it remains finitary. In terms of monads, it follows from the fact that an
endofunctor $T$ of $\Set$ is finitary if and only if, for every set $X$,
every element of $T(X)$ belongs to $T(Y)$ for some finite $Y \sub X$
(Proposition~C.5.2 of~\cite{AMM}). 
\end{remark}

Our proof that totally free monads are minimal rests on a simple
cardinality observation.

\begin{lemma}
\label{lemma:card}
Let $T$ be a monad on $\Set$ and let $A$ and $B$ be free $T$-algebras. If
$\mg{A} < \mg{B}$ then there is an injective homomorphism $A \to B$.
\end{lemma}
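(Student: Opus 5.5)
Write $A \iso F(X)$ and $B \iso F(Y)$ for sets $X$ and $Y$. The plan is to produce an injection $X \to Y$. Lemma~\ref{lemma:monics} then says that $F$ preserves monics, so $F(X) \to F(Y)$ is monic in $\Set^T$. Since the forgetful functor is a right adjoint, it preserves monics too. So the map is an injective homomorphism, and composing with the isomorphisms gives the injective homomorphism $A \to B$.

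By the axiom of choice, to get an injection $X \to Y$ it suffices to show $\mg{X} \leq \mg{Y}$. In fact only $X \neq \emptyset$ matters for a delicate case: if $X$ is empty, then $X \to Y$ is trivially injective. The core step is a cardinality comparison. If $\mg{X} > \mg{Y}$, I want to deduce $\mg{F(X)} \geq \mg{F(Y)}$, contradicting $\mg{A} < \mg{B}$. Suppose $\mg{Y} < \mg{X}$ with $Y$ nonempty. Then there is an injection $Y \to X$ that is split, with a retraction $X \to Y$. Applying $T$ and using that $T$ preserves monics (Lemma~\ref{lemma:monics}) gives an injection $T(Y) \to T(X)$, so $\mg{B} = \mg{T(Y)} \leq \mg{T(X)} = \mg{A}$. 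This contradicts $\mg{A} < \mg{B}$.

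The edge case $Y = \emptyset$ with $X$ nonempty needs a direct check. Here I would use Lemma~\ref{lemma:monics} on the unique map $\emptyset \to X$, which is monic, so $T(\emptyset) \to T(X)$ is injective and again $\mg{B} \leq \mg{A}$. Hence $\mg{X} \leq \mg{Y}$ by trichotomy of cardinals (choice), which gives an injection $X \to Y$ and completes the argument.

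The main obstacle is therefore mild. It lies in making sure that monicity of $F(X) \to F(Y)$ really means injectivity of the underlying function, which follows from the forgetful functor preserving monics. It also requires handling the empty-set boundary carefully, and there Lemma~\ref{lemma:monics} applies uniformly since $\emptyset \to X$ is always monic.
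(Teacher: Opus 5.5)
Your proof is correct and follows the paper's argument. You assume the reverse inequality between the generating sets, use that $T$ (equivalently $F$) preserves monics to get $\mg{B} \leq \mg{A}$, a contradiction, and then apply preservation of monics again to an injection $X \to Y$. The separate treatment of $Y = \emptyset$ and the remark about split injections are unnecessary, since Lemma~\ref{lemma:monics} applies to every injection, including $\emptyset \to X$.
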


The hypothesis cannot be weakened to $\mg{A} \leq \mg{B}$, as can be seen
by considering finite-dimensional real vector spaces.

\begin{proof}
Write $A = F(X)$ and $B = F(Y)$. If $\mg{Y} \leq \mg{X}$ then there is an
injective homomorphism $F(Y) \to F(X)$, since $F$ preserves monics
(Lemma~\ref{lemma:monics}). But then $\mg{B} \leq \mg{A}$, a
contradiction. Hence $\mg{X} \leq \mg{Y}$. Again because $F$ preserves
monics, this implies that there is an injective homomorphism $A = F(X) \to
F(Y) = B$.
\end{proof}

\begin{thm}[Minimality]
\label{thm:min}
Every nontrivial totally free monad on $\Set$ is minimal.
\end{thm}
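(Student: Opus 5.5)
The plan is to show that every quotient $\pi \from T \epic S$ of a nontrivial totally free monad $T$ is either trivial or an isomorphism. This is the intended reading of minimality from the introduction: adjoining any new law makes the theory trivial. A trivial quotient always exists, for instance one of the two trivial monads, so it cannot count as a proper quotient in the relevant sense. By Lemma~\ref{lemma:qt-iso}, if $S$ is nontrivial it suffices to show that every $T$-algebra is an $S$-algebra. So assume $S$ is nontrivial and fix an $S$-algebra $B$ with at least two elements.

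The key step is to embed an arbitrary $T$-algebra into an $S$-algebra, and then use closure under subalgebras. Let $A$ be any $T$-algebra. By Lemma~\ref{lemma:sp}, for every set $K$ the power $B^K$, formed in $\Set^T$, is again an $S$-algebra. Its underlying set has cardinality $\mg{B}^{\mg{K}} \geq 2^{\mg{K}}$. Taking $K$ large, for example $\mg{K} = \mg{A}$, gives $\mg{A} < \mg{B^K}$ by Cantor's theorem.

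Because $T$ is totally free, both $A$ and $B^K$ (viewed as $T$-algebras via $\pi^*$) are free $T$-algebras. Lemma~\ref{lemma:card} then provides an injective homomorphism $A \to B^K$, so $A$ is isomorphic to a sub-$T$-algebra of the $S$-algebra $B^K$. By Lemma~\ref{lemma:sp} again, that subalgebra is an $S$-algebra. Since $\Set^S$ is a full replete subcategory of $\Set^T$, $A$ is itself an $S$-algebra: structure transports along the isomorphism, and a $T$-algebra isomorphic to an $S$-algebra satisfies the same laws. Hence every $T$-algebra is an $S$-algebra, and Lemma~\ref{lemma:qt-iso} gives that $\pi$ is an isomorphism. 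Together with the nontriviality hypothesis on $T$, this means $T$ has no proper nontrivial quotients, so it is minimal.

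The only point I expect to need care over is repleteness, i.e.\ whether ``is an $S$-algebra'' is invariant under isomorphism of $T$-algebras. It is, because whether the monad map $T \to \End(A)$ factors through $\pi$ depends only on which operations agree on $A$, and isomorphisms preserve that. Everything else is a direct combination of the earlier lemmas. No finitariness is used, consistent with the claim that the result holds for arbitrary monads.
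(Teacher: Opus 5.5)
Your proposal is correct and follows the paper's proof essentially step for step. You reduce via Lemma~\ref{lemma:qt-iso} and Lemma~\ref{lemma:sp} to embedding an arbitrary $T$-algebra into a power $B^K$ of a nontrivial $S$-algebra, and total freeness together with Lemma~\ref{lemma:card} supplies that embedding. Your concrete choice $\mg{K} = \mg{A}$ via Cantor, your reading of minimality as excluding trivial quotients, and your repleteness check make explicit three points that the paper leaves implicit.
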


The theorem holds even for non-finitary monads, and so does not entirely
follow from Givant's theorem.

\begin{proof}
Let $T$ be a nontrivial totally free monad on $\Set$. Let $\pi
\from T \epic S$ be a quotient map, with $S$ nontrivial. We prove that $\pi$
is an isomorphism.

By Lemma~\ref{lemma:qt-iso}, it is enough to prove that every $T$-algebra
$A$ is an $S$-algebra. By Lemma~\ref{lemma:sp}, it is therefore enough to
prove that $A$ is a subalgebra of a product of $S$-algebras. 

Since $S$ is nontrivial, we can choose a nontrivial $S$-algebra $B$. And
since $B$ is nontrivial, there is a set $X$ such that $\mg{A} <
\mg{B}^{\mg{X}}$. By Lemma~\ref{lemma:card}, $A$ is a subalgebra of the
power $B^X$. Hence $A$ is a subalgebra of a product of $S$-algebras,
completing the proof.
\end{proof}

The rest of this section is not logically necessary, but sheds light on the
meaning of minimality and Theorem~\ref{thm:min}.

\begin{propn}
\label{propn:min-tfae}
For a nontrivial finitary monad $T$ on $\Set$, the following conditions are
equivalent. 
\begin{enumerate}
\item 
\label{part:mt-min}
$T$ is minimal;

\item
\label{part:mt-maps}
for all sets $X$, all distinct $\theta, \theta' \in F(X)$ and all
nontrivial $T$-algebras $A$, there exists a homomorphism $f \from F(X) \to
A$ such that $f(\theta) \neq f(\theta')$.

\item
\label{part:mt-ops}
for all nontrivial $T$-algebras $A$, all $n \in \N$, and all $\theta,
\theta' \in T(n)$, 
\[
\theta_A = \theta'_A 
\implies
\theta = \theta'.
\]
\end{enumerate}
\end{propn}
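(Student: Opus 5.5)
I will prove the cycle (\ref{part:mt-min})$\Rightarrow$(\ref{part:mt-ops})$\Rightarrow$(\ref{part:mt-maps})$\Rightarrow$(\ref{part:mt-min}). Throughout I use that $F(X)$ has underlying set $T(X)$. I also use that a homomorphism $f\from F(X)\to A$ is $\overline{\family{a}}$ for a unique $\family{a}\in A^X$, and that $\overline{\family{a}}(\theta)=\theta_A(\family{a})$.

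\emph{(\ref{part:mt-min})$\Rightarrow$(\ref{part:mt-ops}).} Let $A$ be a nontrivial $T$-algebra and form the factorization $T\epic T_A\monic \End(A)$ of Example~\ref{eg:qt-End}. The algebra $A$ is then a $T_A$-algebra with at least two elements, so $T_A$ is nontrivial. Since $T$ is minimal, the quotient map $T\epic T_A$ is an isomorphism. Hence $T(n)\to T_A(n)\monic A^{A^n}$ is injective, which says exactly that $\theta_A=\theta'_A$ implies $\theta=\theta'$.

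\emph{(\ref{part:mt-ops})$\Rightarrow$(\ref{part:mt-maps}).} Take distinct $\theta,\theta'\in T(X)$ and a nontrivial $T$-algebra $A$. Because $T$ is finitary, Remark~\ref{rmk:qt-fin} (the cited Proposition~C.5.2) gives a finite $Y\sub X$, say $\mg{Y}=n$, with $\theta,\theta'$ both in the image of $T(Y)\to T(X)$. This map is injective by Lemma~\ref{lemma:monics}, or trivially if $Y=\emptyset$ but $X\ne\emptyset$. So they come from distinct $\phi,\phi'\in T(n)$.

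By (\ref{part:mt-ops}) there is $\family{b}\in A^n$ with $\phi_A(\family{b})\ne\phi'_A(\family{b})$. If $n=0$ the family $\family{b}$ is empty and extends to any $\family{a}\in A^X$. If $X$ is empty then $Y$ is empty too, and the empty family works directly. Otherwise extend $\family{b}$ to $\family{a}\in A^X$ arbitrarily, which is possible since $A$ is nonempty. Naturality of $\theta\mapsto\theta_A$ in the arity, meaning that a $T$-algebra gives a monad map $T\to\End(A)$, then shows $\theta_A(\family{a})=\phi_A(\family{b})$, and likewise for the primed operations. Setting $f=\overline{\family{a}}$ gives $f(\theta)\ne f(\theta')$. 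The only fiddly part of this step is the empty-arity bookkeeping and the naturality check.

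\emph{(\ref{part:mt-maps})$\Rightarrow$(\ref{part:mt-min}).} This step carries the real content; it follows the proof of Theorem~\ref{thm:min}. Let $\pi\from T\epic S$ be a quotient with $S$ nontrivial, and pick a nontrivial $S$-algebra $B$, regarded as a $T$-algebra via $\pi^*$. For each $X$, condition (\ref{part:mt-maps}) says that the homomorphisms $F(X)\to B$ jointly separate the points of $F(X)$. Therefore the induced homomorphism $F(X)\to B^{B^X}$ is injective, so $F(X)$ is a subalgebra of a power of an $S$-algebra. By Lemma~\ref{lemma:sp}, $F(X)$ is then an $S$-algebra.

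Every $T$-algebra is a quotient of a free one. Rather than invoking closure under quotients, which is not among the stated lemmas, I argue directly. Since $F(X)$ is an $S$-algebra for every $X$, its $T$-structure map $T(T(X))\to T(X)$ factors through $\pi_{T(X)}$. Now let $(A,\alpha)$ be any $T$-algebra and apply this with $X=A$. The map $\alpha\from F(A)\to A$ is a split epimorphic homomorphism of underlying sets, split by the unit $\eta_A$, and $\pi_A$ is surjective. From these one can check that $\alpha$ factors through $\pi_A$, which makes $A$ an $S$-algebra.

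Alternatively, and more simply, I can show $\pi_X$ is injective directly. If $\pi_X(\theta)=\pi_X(\theta')$, then $\theta$ and $\theta'$ agree under every homomorphism $F(X)\to B$, because such maps factor through $S(X)$ when $B$ is an $S$-algebra. Then (\ref{part:mt-maps}) forces $\theta=\theta'$. So $\pi$ is componentwise bijective and hence an isomorphism. I will use this direct argument, which also makes Lemma~\ref{lemma:sp} unnecessary.
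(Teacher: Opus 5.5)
Your proof is correct, but you decompose it differently from the paper. The paper proves (1)$\Rightarrow$(2$'$)$\Rightarrow$(3)$\Rightarrow$(1), where (2$'$) is (2) rephrased as injectivity of $\theta\mapsto\theta_A$ on every $T(X)$. You prove (1)$\Rightarrow$(3)$\Rightarrow$(2)$\Rightarrow$(1), and your (1)$\Rightarrow$(3) is the paper's image-factorization argument restricted to finite arities.

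The real difference is where finitarity enters. The paper uses it in (3)$\Rightarrow$(1): a quotient $\pi$ that is injective in all finite arities is injective in all arities. Otherwise two distinct finitary operations are identified in $S$, and (3) then forces every $S$-algebra to be trivial. You use finitarity in (3)$\Rightarrow$(2) instead: you pull $\theta,\theta'$ back to a finite $Y\subseteq X$ and extend a separating family from $A^Y$ to $A^X$ by naturality.

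Your (2)$\Rightarrow$(1) is the paper's dichotomy run in the other direction. A single nontrivial $S$-algebra $B$ satisfies every equation identified by $\pi$, so (2) makes each $\pi_X$ injective, and this step uses no finitarity at all. The paper's (1)$\Rightarrow$(2$'$) is likewise valid for any monad. Putting the two together shows that (1)$\Leftrightarrow$(2) holds for arbitrary monads on $\Set$, with finitarity needed only to bring in the finite-arity condition (3). The paper's ordering, in exchange, makes its middle implication trivial.

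Two minor points. You do not need injectivity of $T(Y)\to T(X)$, since $\phi\neq\phi'$ already follows from $\theta\neq\theta'$. And the abandoned first sketch in (2)$\Rightarrow$(1), which rests on an unverified ``one can check'', is best deleted.
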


\begin{proof}
First we rephrase condition~\bref{part:mt-maps}. Homomorphisms $f \from
F(X) \to A$ correspond to maps of sets $X \to A$, which can be seen as
families $\family{a} = (a_x)_{x \in X} \in A^X$. Then $f(\theta) =
\theta_A(\family{a})$ for all $\theta \in F(X)$, as in
Section~\ref{sec:intro}. Condition~\bref{part:mt-maps} is therefore
equivalent to:
\begin{quote}
\textit{for all nontrivial $T$-algebras $A$, all sets $X$, and
all $\theta, \theta' \in T(X)$,}
\[
\theta_A = \theta'_A \implies \theta = \theta'.
\]
\end{quote}
We call this condition~(\ref{part:mt-maps}$'$).

\bref{part:mt-min}$\textimplies$(\ref{part:mt-maps}$'$):
assume~\bref{part:mt-min} and let $A$ be a nontrivial $T$-algebra. As in
Example~\ref{eg:qt-End}, there is a quotient map $\pi \from T \epic T_A$, and
the monad $T_A$ is nontrivial since it has $A$ as a nontrivial
algebra. Hence by~\bref{part:mt-min}, $\pi$ is an isomorphism. So by the
factorization~\eqref{eq:qe-fact}, the monad map 
\[
\begin{array}{ccc}
T       &\to            &\End(A)        \\
\theta  &\mapsto        &\theta_A
\end{array}
\]
is injective in each component, proving~(\ref{part:mt-maps}$'$).

(\ref{part:mt-maps}$'$)$\textimplies$\bref{part:mt-ops}: trivial. 

\bref{part:mt-ops}$\textimplies$\bref{part:mt-min}:
assume~\bref{part:mt-ops} and take a quotient $\pi \from T \epic S$. If
$\pi_n$ is injective for all finite sets $n$ then $\pi_X$ is injective for
all sets $X$, since $T$ is finitary. In that case, $\pi$ is an
isomorphism. 

Otherwise, $\pi_n(\theta) = \pi_n(\theta')$ for some $n \in \N$ and
distinct $\theta, \theta' \in T(n)$. Now on the one hand, $\theta_A =
\theta'_A$ for all $S$-algebras $A$, since $\pi_n(\theta) =
\pi_n(\theta')$. On the other, $\theta_A \neq \theta'_A$ for all nontrivial
$T$-algebras $A$, by~\bref{part:mt-ops}. Hence every $S$-algebra is
trivial; that is, $S$ is trivial.
\end{proof}

\begin{example}
\label{eg:min-gp}
The monad $T$ for groups is not minimal, since condition~\bref{part:mt-ops}
fails: the binary operations $\theta, \theta' \in T(2)$ representing $(x,
y) \mapsto xy$ and $(x, y) \mapsto yx$ induce the same function $A^2 \to A$
for any abelian group $A$, and of course nontrivial abelian groups exist.
\end{example}

\begin{example}
\label{eg:min-vect}
The monad $T$ for vector spaces over any division ring $k$ is minimal,
since condition~\bref{part:mt-maps} holds. Indeed, for any vector space
$V$, distinct $v, v' \in V$ and nontrivial vector space $W$, there exists a
linear map $f \from V \to W$ with $f(v) \neq f(v')$. Since every nontrivial
vector space contains a copy of the $1$-dimensional space $k$, the force of
this statement is that it holds when $W = k$. Thus, the minimality of the
theory of vector spaces essentially states that the canonical map $V \to
V^{**}$ into the double dual is always injective.
\end{example}

\begin{example}
\label{eg:min-semi}
The monad $T$ for join-semilattices is minimal. Again, we verify
condition~\bref{part:mt-maps}. Take a semilattice $B$ (which need not, in
fact, be free), distinct $\theta, \theta' \in B$, and a nontrivial
semilattice $A$. Without loss of generality, $\theta' \not\leq \theta$. We
can choose $a, a' \in A$ such that $a < a'$. There is a homomorphism $f
\from B \to A$ given by
\[
f(b) = 
\begin{cases}
a               &\text{if } b \leq \theta,      \\
a'              &\text{otherwise},
\end{cases}
\]
and then $f(\theta) \neq f(\theta')$, proving minimality.

However, the semilattice monad is not totally free. Indeed, $T(X)$ is
the set of finite subsets of $X$, for any set $X$, so on cardinality
grounds alone, the three-element totally ordered set is a semilattice that
is not free.
\end{example}

\section{Dimension}
\label{sec:dim}

Here we prove that every totally free nontrivial finitary monad on
$\Set$ has a well-defined notion of dimension, in the sense
that free algebras on sets of different cardinalities are never isomorphic.

When the cardinalities are infinite, this result is a generalisation of a standard result for vector spaces (Section~IX.2 of~\cite{Jaco}) and does not need total freeness. We begin by showing this.

\begin{lemma}
\label{lemma:union}
Let $T$ be a nontrivial finitary monad on $\Set$. Let $X$ and $Y$ be
sets. If there exists a surjective homomorphism $F(X) \epic F(Y)$ then $Y$
can be expressed as a union of $X$ finite subsets.
\end{lemma}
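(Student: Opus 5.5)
Let $f \from F(X) \epic F(Y)$ be a surjective homomorphism. Since the underlying set of $F(Y)$ is $T(Y)$ and $T$ is nontrivial, the unit $\eta_Y \from Y \to T(Y)$ is injective, so each $y \in Y$ gives an element $\eta_Y(y)$ of $F(Y)$. The plan is to cover $Y$ by finite sets indexed by the generators $x \in X$.

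For each $x \in X$, consider $f(\eta_X(x)) \in T(Y)$. Since $T$ is finitary, Remark~\ref{rmk:qt-fin} (via Proposition~C.5.2 of~\cite{AMM}) gives a finite subset $Y_x \sub Y$ such that $f(\eta_X(x))$ lies in the image of $T(Y_x) \to T(Y)$. This map is injective by Lemma~\ref{lemma:monics}, so we may regard $T(Y_x) = F(Y_x)$ as a subalgebra of $F(Y)$. Choosing such a $Y_x$ for each $x$ uses the axiom of choice, which the paper assumes.

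The key step is to show that $Y = \bigcup_{x \in X} Y_x$. Put $Z = \bigcup_x Y_x \sub Y$. By Lemma~\ref{lemma:monics}, $F(Z) \to F(Y)$ is injective, so its image is a subalgebra of $F(Y)$, and it contains every $f(\eta_X(x))$. The image of $f$ is the subalgebra generated by these elements, because $F(X)$ is generated by $\eta_X(X)$ and homomorphic images of generating sets generate the image. Hence $f(F(X)) \sub F(Z)$. Since $f$ is surjective, $F(Z) = F(Y)$ as subsets of $T(Y)$. In particular $\eta_Y(y) \in T(Z)$ for every $y \in Y$.

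It remains to show $y \in Z$, which I expect to be the main obstacle: we must rule out a generator $y \notin Z$ with $\eta_Y(y)$ in the image of $T(Z)$. If $Z$ is empty and $y \in Y$, map $F(Y) \to F(Y)$ by the homomorphism induced by a function $Y \to T(Y)$. Otherwise, since $T$ is nontrivial, pick a nontrivial algebra $A$ with two elements $a \neq a'$. Define $g \from Y \to A$ sending $Z$ to $a$ and $y$ to $a'$, and let $\bar g \from F(Y) \to A$ be the induced homomorphism. By naturality, $\bar g$ restricted to $F(Z)$ depends only on $g|_Z$, which is the constant $a$. Now compare with $g' \from Y \to A$ equal to $a$ everywhere. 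The homomorphisms $\bar g$ and $\bar g'$ agree on $F(Z)$, because $g|_Z = g'|_Z$ and the inclusion $F(Z) \to F(Y)$ is $F$ applied to $Z \sub Y$. So they agree at $\eta_Y(y)$, giving $a' = g(y) = g'(y) = a$, a contradiction. This argument covers $Z = \emptyset$ as well, since then $g|_Z = g'|_Z$ trivially. Therefore $Y = \bigcup_{x \in X} Y_x$, a union of $X$ finite subsets.
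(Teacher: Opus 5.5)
Your proof is correct and follows the paper's route. You choose finite $Y_x$ with $f(\eta_X(x))$ in the image of $F(Y_x)$, deduce $F(Z) = F(Y)$ for $Z = \bigcup_{x \in X} Y_x$ from surjectivity, and use nontriviality to conclude $Z = Y$; your two-element-algebra argument for that last step is an unpacked version of the paper's observation that $F$ is faithful and so reflects epics, and the sentence beginning ``If $Z$ is empty and $y \in Y$, map $F(Y) \to F(Y)$\ldots'' is an unfinished fragment that does no work and should be deleted, since, as you note, the main argument already covers $Z = \emptyset$.
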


\begin{proof}
Take a surjective homomorphism $f \from F(X) \epic F(Y)$. For each $x \in
X$, the element $f(x)$ of $F(Y)$ belongs to $F(Y_x)$ for some finite subset
$Y_x$ of $Y$, since $T$ is finitary. Hence $\im f \sub F(\bigcup_{x \in X}
Y_x)$. But $f$ is surjective, so $F(\bigcup_{x \in X} Y_x) = F(Y)$. It
follows from the nontriviality of $T$ that $\bigcup_{x \in X} Y_x =
Y$. (This last step is clear syntactically. Alternatively, the image under
$F$ of $j \from \bigcup_{x \in X} Y_x \incl Y$ is an isomorphism; but $F$
is faithful since $T$ is nontrivial, so $F$ reflects epics and $j$
itself is epic.)
\end{proof}

\begin{propn}
\label{propn:inf-dim}
Let $T$ be a nontrivial finitary monad on $\Set$. Let $X$ and $Y$ be
infinite sets. If $F(X) \iso F(Y)$ as $T$-algebras then $X \iso Y$ as sets.
\end{propn}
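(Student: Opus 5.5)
By symmetry it suffices to show that $\mg{Y} \leq \mg{X}$. We then obtain $\mg{X} \leq \mg{Y}$ in the same way, and Schr\"oder--Bernstein gives $X \iso Y$. An isomorphism $F(X) \to F(Y)$ is in particular a surjective homomorphism $F(X) \epic F(Y)$. Since $T$ is finitary and nontrivial, Lemma~\ref{lemma:union} applies and expresses $Y$ as a union $Y = \bigcup_{x \in X} Y_x$ of finite subsets $Y_x \sub Y$, indexed by $X$.

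Next comes a cardinality count. Each $Y_x$ is finite, so the union has cardinality at most $\mg{X} \cdot \aleph_0$. This uses the axiom of choice, which the paper assumes throughout: we choose for each $x$ an injection $Y_x \to \N$, and this gives a surjection from a subset of $X \times \N$ onto $Y$. Because $X$ is infinite, $\mg{X} \cdot \aleph_0 = \mg{X}$. Hence $\mg{Y} \leq \mg{X}$.

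Exchanging the roles of $X$ and $Y$ and using the inverse isomorphism gives $\mg{X} \leq \mg{Y}$. We have only used that $X$ is infinite in the first direction and $Y$ infinite in the second, so both hypotheses are used. There is no real obstacle, since Lemma~\ref{lemma:union} carries the substantive content. The only point needing care is the cardinal arithmetic $\mg{X} \cdot \aleph_0 = \mg{X}$ for infinite $X$, which again relies on choice.
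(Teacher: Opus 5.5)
Your proposal is correct and follows the paper's approach: apply Lemma~\ref{lemma:union} in each direction and use the cardinal arithmetic $\mg{X} \cdot \aleph_0 = \mg{X}$ for infinite $X$. The paper's proof compresses exactly this into one line.
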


\begin{proof}
This follows from Lemma~\ref{lemma:union} and elementary cardinal arithmetic.
\end{proof}

Proposition~\ref{propn:inf-dim} fails if we allow finite sets, as noted in
Section~\ref{sec:intro}. To extend it to arbitrary sets, we need to assume
that $T$ is totally free. But before making that assumption, we give two
further general results.

\begin{propn}
\label{propn:fg-free}
Let $T$ be a finitary monad on $\Set$. Then any finitely generated free
$T$-algebra is the free $T$-algebra on a finite set.
\end{propn}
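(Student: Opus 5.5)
Suppose $A = F(X)$ is free and finitely generated, say by elements $a_1, \ldots, a_n \in F(X)$. The plan is to show directly that $X$ is finite, so that $A$ is already free on a finite set; equivalently, if $X$ were infinite, no finite set could generate. Since $T$ is finitary, each $a_i$ lies in $F(X_i)$ for some finite subset $X_i \sub X$ (Remark~\ref{rmk:qt-fin}, applied to the elements of $T(X)$). Put $X_0 = X_1 \cup \cdots \cup X_n$, a finite subset of $X$.

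The next step is to compare $F(X_0)$ with $F(X)$. The inclusion $j \from X_0 \incl X$ induces an injective homomorphism $F(j) \from F(X_0) \to F(X)$, by Lemma~\ref{lemma:monics}, and its image is a subalgebra of $F(X)$ containing every $a_i$. Since the $a_i$ generate $F(X)$, that image is all of $F(X)$. Hence $F(j)$ is a bijective homomorphism, so $F(X_0) \iso F(X)$, and $A$ is free on the finite set $X_0$.

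There is one degenerate case to handle. If $X_0$ is empty (for instance when $n = 0$), the argument still goes through: the image of $F(\emptyset)$ is the subalgebra generated by the empty set, which is the subalgebra generated by the $a_i$. If $T$ is trivial the statement is clear in any case.

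The only step that needs real care is the first: that $a_i \in F(X)$ lies in the image of $F(X_i) \to F(X)$ for a finite $X_i \sub X$, and that this image is a subalgebra. The first part is precisely the characterization of finitary endofunctors cited in Remark~\ref{rmk:qt-fin}. The second holds because the image of a homomorphism is a subalgebra. No choice beyond picking the $X_i$ is needed, and nontriviality of $T$ is not required.
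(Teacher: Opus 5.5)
Your proof is correct and is essentially the paper's. The paper applies Lemma~\ref{lemma:union} to a surjection $F(n) \epic F(X)$, and that lemma's proof is your finite-support argument plus one extra step that uses nontriviality ($F$ is faithful, hence reflects epics) to get $X_0 = X$, i.e.\ that $X$ itself is finite. You stop at $F(X_0) \iso F(X)$, which is all the statement (and its use in Proposition~\ref{propn:self-surj}) needs, and is why you need no triviality case split. However, your opening sentence, which promises to show that $X$ itself is finite, claims more than you prove, and that claim is false for trivial $T$ (e.g.\ $F(\N) = 1$ when $T(X) = 1$ for all $X$).
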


\begin{proof}
If $T$ is trivial then so is the result; suppose not. If a free algebra
$F(X)$ is finitely generated then there is a surjective homomorphism $F(n)
\epic F(X)$ for some $n \in \N$, and Lemma~\ref{lemma:union} then implies
that $X$ is finite. 
\end{proof}

\begin{lemma}
\label{lemma:fg-colim}
Let $T$ be a finitary monad on $\Set$. Let $D\from \scat{I} \to \Set^T$ be
a filtered diagram such that the algebra $D(i)$ is finitely generated for
each object $i$ of $\scat{I}$ and the homomorphism $D(u) \from D(i) \to
D(j)$ is surjective for each map $u \from i \to j$ in $\scat{I}$. Let $n
\in \N$. If $\colim D$ can be generated by $n$ elements then so can $D(i)$
for some $i \in \scat{I}$.
\end{lemma}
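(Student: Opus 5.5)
Pick an object $i_0$ and, since $D(i_0)$ is finitely generated, generators $d_1, \ldots, d_m$ of $D(i_0)$. Write $\lambda_i \from D(i) \to \colim D$ for the colimit coprojections. The first step is to show that every $\lambda_i$ is surjective. I would use the standard fact that, because $T$ is finitary, $U \from \Set^T \to \Set$ preserves filtered colimits. So the underlying set of $\colim D$ is the filtered colimit of the sets $D(i)$, and every element of $\colim D$ has the form $\lambda_j(e)$ for some $j$ and some $e \in D(j)$. Given $j$, filteredness provides $k$ with maps $u \from i \to k$ and $v \from j \to k$. Since $D(u)$ is surjective, $\lambda_j(e) = \lambda_k(D(v)(e)) = \lambda_k(D(u)(d)) = \lambda_i(d)$ for some $d \in D(i)$. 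Hence $\lambda_i$ is surjective for every $i$.

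Next, suppose $c_1, \ldots, c_n$ generate $\colim D$. Fix $i_0$ and use surjectivity of $\lambda_{i_0}$ to lift these to $e_1, \ldots, e_n \in D(i_0)$ with $\lambda_{i_0}(e_r) = c_r$. Each original generator $d_s$ of $D(i_0)$ is mapped into the subalgebra generated by the $c_r$. So, because $T$ is finitary, there is an operation $\theta_s \in T(n)$ with
\[
\lambda_{i_0}(d_s) = (\theta_s)_{\colim D}(c_1, \ldots, c_n) = \lambda_{i_0}\bigl((\theta_s)_{D(i_0)}(e_1, \ldots, e_n)\bigr),
\]
where the second equality holds because $\lambda_{i_0}$ is a homomorphism.

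The key step, and the one I expect to be the main obstacle, is to make these $m$ equations hold at some finite stage. Because the underlying-set colimit is filtered, two elements of $D(i_0)$ with the same image under $\lambda_{i_0}$ are already identified by $D(u)$ for some $u \from i_0 \to k$. Since there are finitely many equations, filteredness gives a single $u \from i_0 \to k$ that works for all $s$ at once. Putting $e'_r = D(u)(e_r)$, we then get $D(u)(d_s) = (\theta_s)_{D(k)}(e'_1, \ldots, e'_n)$ for every $s$.

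Finally, $D(u)$ is surjective, so the elements $D(u)(d_s)$ generate $D(k)$. Each of them lies in the subalgebra generated by $e'_1, \ldots, e'_n$, so $D(k)$ is generated by $n$ elements, as required. The delicate points are the use of the finitary hypothesis, both for preservation of filtered colimits by $U$ and for expressing generated elements via finitary operations, and the choice of a single $k$ handling all $m$ equations simultaneously.
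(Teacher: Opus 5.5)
Your proof is correct and takes essentially the same route as the paper's. You lift the $n$ generators of the colimit to one stage, write the images of that stage's finitely many generators as $n$-ary operations applied to the lifts, and use filteredness to make these finitely many equations hold along a single map; surjectivity of that transition map then finishes the argument.

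The only difference is cosmetic. You first show that every coprojection is surjective, using the surjective transition maps, whereas the paper obtains lifts at a common stage from joint surjectivity plus filteredness. Also, writing elements of the subalgebra generated by $c_1,\ldots,c_n$ as $\theta_C(c_1,\ldots,c_n)$ with $\theta\in T(n)$ works for any monad on $\Set$, so finitariness is not needed at that step.
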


The hypothesis that each algebra $D(i)$ is finitely generated cannot be
dropped, as demonstrated by the colimit
\[
\colim \bigl( \N \toby{f} \N \toby{f} \cdots \bigl) = 1
\]
in $\Set$, where $f(n) = \max(n - 1, 0)$. 

\begin{proof}
Write $\bigl(D(i) \toby{p_i} C = \colim D\bigr)_{i \in \scat{I}}$ for the
colimit cocone. By standard results on filtered colimits in categories of
algebras, the maps $p_i$ are jointly surjective, and two elements $a, b \in
D(i)$ satisfy $p_i(a) = p_i(b)$ if and only if $D(u)(a) = D(u)(b)$ for some
map $i \toby{u} j$ with domain $i$.

By hypothesis, $C$ is generated by some elements $c_1, \ldots,
c_n$. By filteredness, there exist $i \in \scat{I}$ and $c'_1, \ldots,
c'_n \in D(i)$ whose images under $p_i$ are $c_1, \ldots, c_n$
respectively. Since $D(i)$ is finitely generated, we can choose a finite
set $\{a_1, \ldots, a_m\}$ of generators. 

Let $r \in \{1, \ldots, m\}$. Since $p_i(a_r)$ belongs to $C$, which is
generated by $c_1, \ldots, c_n$, there is some $\theta^r \in T(n)$ such
that $p_i(a_r) = \theta^r_{C}(c_1, \ldots, c_n)$. Then
\[
p_i(a_r) 
= 
\theta^r_{C}\bigl(p_i(c'_1), \ldots, p_i(c'_n)\bigr) 
= 
p_i\bigl(\theta^r_{D(i)} (c'_1, \ldots, c'_n)\bigr),
\]
since $p_i$ is a homomorphism. Hence there exists a map $u_i \from i \to
j_i$ in $\scat{I}$ such that 
\[
D(u_i)(a_r) = D(u_i)\bigl(\theta^r_{D(i)} (c'_1, \ldots, c'_n)\bigr).
\]

This holds for all $r \in \{1, \ldots, m\}$, so by filteredness, there is a
\emph{single} map $u \from i \to j$ in $\scat{I}$ such that for all $r$,
\[
D(u)(a_r) = D(u)\bigl(\theta^r_{D(i)} (c'_1, \ldots, c'_n)\bigr).
\]
Thus, for all $r \in \{1, \ldots, m\}$, writing $d_s = D(u)(c'_s) \in D(j)$,
\[
D(u)(a_r) = \theta^r_{D(j)} (d_1, \ldots, d_n).
\]
In particular, $D(u)(a_1), \ldots, D(u)(a_m)$ all belong to the subalgebra
of $D(j)$ generated by $d_1, \ldots, d_n$. But $D(u)$ is surjective (by
hypothesis), and $a_1, \ldots, a_m$ generate $D(i)$, so $D(u)(a_1), \ldots,
D(u)(a_m)$ generate $D(j)$. Hence $d_1, \ldots, d_n$ also generate $D(j)$,
completing the proof.
\end{proof}

Our next result is the heart of the proof of the dimension theorem for
totally free theories. It rests on a striking application of Zorn's
lemma. We prove that a certain type of map is an isomorphism not by
constructing an inverse, but by merely showing that one exists.

\begin{propn}
\label{propn:self-surj}
Let $T$ be a totally free finitary monad on $\Set$, and let $n \in
\N$. Then every surjective homomorphism $F(n) \epic F(n)$ is an isomorphism.
\end{propn}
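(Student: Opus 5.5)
The plan is to prove injectivity of a surjection $f \from F(n) \epic F(n)$ by a maximality argument on congruences, rather than by building an inverse. Total freeness is used only to identify a certain quotient of $F(n)$ as a free algebra $F(X)$. The maps then force the kernel of $f$ to be trivial.

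\textbf{Setup.} If $T$ is trivial the statement is immediate, so assume $T$ is nontrivial. Let $m$ be the least natural number such that $F(n)$ can be generated by $m$ elements; thus $m \leq n$. Let $P$ be the set of congruences $R$ on $F(n)$ such that $F(n)/R$ cannot be generated by fewer than $m$ elements, ordered by inclusion. The diagonal lies in $P$ by the choice of $m$, so $P$ is nonempty.

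\textbf{Zorn step.} Let $(R_i)$ be a nonempty chain in $P$. Because $T$ is finitary, the union $R_\infty = \bigcup_i R_i$ is again a congruence. The quotient $F(n)/R_\infty$ is the filtered colimit of the quotients $F(n)/R_i$. Each of these is finitely generated, being a quotient of $F(n)$, and each transition map between them is surjective. Suppose $F(n)/R_\infty$ could be generated by $k < m$ elements. Lemma~\ref{lemma:fg-colim} would then give some $F(n)/R_i$ generated by $k$ elements, contradicting $R_i \in P$. Hence $R_\infty \in P$, and by Zorn's lemma there is a maximal element $R \in P$.

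\textbf{Identifying the quotient.} Write $q \from F(n) \epic F(n)/R$ for the quotient map. By total freeness, $F(n)/R \iso F(X)$ for some set $X$. This algebra is generated by $n$ elements, so $X$ is finite by Proposition~\ref{propn:fg-free}. Since $R \in P$, the algebra $F(X)$ is not generated by fewer than $m$ elements, which forces $\mg{X} \geq m$. We now build a surjection $g \from F(X) \epic F(n)$ in two stages.
\begin{itemize}
\item If $m \geq 1$, there is a surjection $X \epic m$, and $F$ preserves epics, giving a surjection $F(X) \epic F(m)$. If $m = 0$, then $F(0) = F(n)$ is nonempty, and any map $X \to F(0)$ extends to a surjection $F(X) \epic F(0)$.
\item There is a surjection $F(m) \epic F(n)$ obtained by sending the $m$ generators to a generating set of $F(n)$ of size $m$.
\end{itemize}
The composite of these is the required surjection $g$.

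\textbf{Conclusion.} Consider the surjective composites
\[
g q \from F(n) \to F(n)
\qquad\text{and}\qquad
f g q \from F(n) \to F(n).
\]
Their kernels satisfy $R \sub \ker(gq) \sub \ker(fgq)$. Both quotients are isomorphic to $F(n)$, which by the minimality of $m$ is not generated by fewer than $m$ elements, so both kernels lie in $P$. Maximality of $R$ then gives $\ker(fgq) = \ker(gq)$. Now take $a, b \in F(n)$ with $f(a) = f(b)$. Since $gq$ is surjective, write $a = gq(a')$ and $b = gq(b')$. Then $(a', b') \in \ker(fgq) = \ker(gq)$, so $a = b$. Thus $f$ is injective, and being a bijective homomorphism, it is an isomorphism.

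The step I expect to be most delicate is the Zorn step. It requires checking that the union of a chain of congruences presents the filtered colimit of the quotients, so that Lemma~\ref{lemma:fg-colim} applies. It also depends on choosing the right invariant, namely the minimal generator count $m$ of $F(n)$ rather than $n$ itself. Using $m$ is what ensures both that the diagonal lies in $P$ and that $\ker(gq)$ and $\ker(fgq)$ lie in $P$.
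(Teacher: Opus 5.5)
Your proof is correct, but you organise the Zorn's lemma argument differently from the paper.

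The paper applies Zorn to the preorder of surjective endomorphisms $s$ of $F(n)$, ordered by factorization (equivalently, congruences $R$ with $F(n)/R \iso F(n)$). To bound a chain it forms the colimit $C$, and so it needs total freeness and Proposition~\ref{propn:fg-free} at every chain. These give $C \iso F(m)$, and then Lemma~\ref{lemma:fg-colim} gives a surjection $t \from C \epic F(n)$ and the upper bound $tp$. It finishes with the observation $s \leq s^2$, which forces maximal elements to be isomorphisms.

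You instead enlarge the poset to all congruences whose quotient still needs $m$ generators, where $m$ is the minimal number of generators of $F(n)$. This class is closed under unions of chains by Lemma~\ref{lemma:fg-colim} alone, so your Zorn step works for any finitary monad. Total freeness enters exactly once, to show that the maximal quotient $F(n)/R$ maps onto $F(n)$. Your endgame $R \sub \ker(gq) \sub \ker(fgq)$ is the counterpart of the paper's $s \leq s^2$ trick, applied to $s = gq$ and $fs$.

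What your route buys is a clean separation between the general part and the single step that needs total freeness. The general part is the existence of a maximal congruence that preserves the minimal generator count, valid for any finitary $T$. Since finiteness of $X$ is never actually used (only $\mg{X} \geq m$), you do not even need Proposition~\ref{propn:fg-free}. The paper's route, in exchange, avoids introducing the minimal generator number of $F(n)$ and stays inside the monoid of surjective endomorphisms.

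One small slip: in the case $m = 0$, your assertion that $F(n)$ is nonempty needs $n \geq 1$. The case $n = 0$ is trivial anyway, since $F(0)$ is initial and its only endomorphism is the identity.
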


\begin{proof}
Let $P$ be the preordered set whose elements are surjective homomorphisms $F(n)
\epicby{s} F(n)$, with $s \leq s'$ if and only if there is a
homomorphism $t$ such that $ts = s'$:
\[
\begin{tikzcd}[column sep = tiny]
        &F(n) \ar[ld, "s"', two heads] \ar[rd, "s'", two heads]&       \\
F(n) \ar[rr, "t"', two heads]       &               &F(n).
\end{tikzcd}
\]
Then $t$ is both uniquely determined and surjective.

The elements of $P$ that are isomorphisms form a downwards closed
subset. Moreover, any maximal element of $P$ is an isomorphism:
indeed, every $s \in P$ satisfies $s \leq s^2$ by the commutative triangle
\[
\begin{tikzcd}[column sep = tiny]
        &F(n) \ar[ld, "s"', two heads] \ar[rd, "s^2", two heads]&       \\
F(n) \ar[rr, "s"', two heads]       &               &F(n),
\end{tikzcd}
\]
so if $s$ is maximal then $s$ is an isomorphism. Hence it suffices to prove
that every element of $P$ is less than or equal to some maximal
element. For this, we use Zorn's lemma.

The proof will be complete if we can show that every chain in $P$ has an
upper bound. The empty chain has $1_{F(n)}$ as an upper bound. Now take a
nonempty totally ordered set $I$ and an order-preserving map 
\[
\begin{array}{ccc}
I               &\to            &P      \\
i               &\mapsto        &s_i.
\end{array}
\]
We must prove there exists $s \in P$ such that $s_i \leq s$ for all $i \in
I$.

When $i \leq j$, write $t_{ij}$ for the unique homomorphism $F(n) \epic
F(n)$ such that
\[
\begin{tikzcd}[column sep = tiny]
        &F(n) \ar[ld, "s_i"', two heads] \ar[rd, "s_j", two heads]&       \\
F(n) \ar[rr, "t_{ij}"', two heads]       &               &F(n)
\end{tikzcd}
\]
commutes. Define a functor $D \from I \to \Set^T$ by $D(i) = F(n)$
and $D(i \leq j) = t_{ij}$. This is a filtered diagram, since $I$ is
nonempty and totally ordered. It has a colimit cocone $(D(i) \toby{p_i}
C)_{i \in I}$. Since $I$ is connected, there is a unique homomorphism $p
\from F(n) \to C$ such that
\[
\begin{tikzcd}[column sep = tiny]
        &[-1em] F(n) \ar[ld, "s_i"', two heads] \ar[rd, "p"]   &[+1em]       \\
D(i) = F(n) \ar[rr, "p_i"'{pos=.3}]       &                             &C
\end{tikzcd} 
\]
commutes for each $i \in I$.

Each homomorphism $D(i \leq j) = t_{ij}$ is surjective, and it follows from
the construction of filtered colimits that each of the coprojections $p_i$
is also surjective. Since $I$ is nonempty, $p$ is surjective and $C$ is
finitely generated. And $C$ is free too (since $T$ is totally free), so by
Proposition~\ref{propn:fg-free}, $C \iso F(m)$ for some $m \in
\N$. Lemma~\ref{lemma:fg-colim} then applies, giving a surjection $t \from
C \epic F(n)$.

Now consider the diagram
\[
\begin{tikzcd}
        &F(n) \ar[ld, "s_i"', two heads] 
\ar[d, "p", two heads] \ar[rd, "s = tp", two heads]    &        \\
F(n) \ar[r, "p_i"', two heads]  &
C \ar[r, "t"', two heads]&
F(n).
\end{tikzcd}
\]
Putting $s = tp \from F(n) \epic F(n)$ defines an element $s \in P$. For
each $i \in I$, the two inner triangles commute, so the outer triangle
commutes, giving $s_i \leq s$.
\end{proof}

\begin{propn}
\label{propn:free-surj}
Let $T$ be a totally free nontrivial finitary monad on $\Set$. Let
$X$ and $Y$ be sets. If there exists a surjective homomorphism $F(X) \epic
F(Y)$ then $\mg{X} \geq \mg{Y}$.
\end{propn}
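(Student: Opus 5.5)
Suppose there is a surjective homomorphism $f \from F(X) \epic F(Y)$, and suppose for a contradiction that $\mg{X} < \mg{Y}$. I would split into cases according to whether $X$ is finite.

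\emph{Infinite case.} If $X$ is infinite, Lemma~\ref{lemma:union} expresses $Y$ as a union of $\mg{X}$ finite subsets. Elementary cardinal arithmetic then gives $\mg{Y} \leq \mg{X} \cdot \aleph_0 = \mg{X}$, which is a contradiction. Total freeness is not needed here.

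\emph{Finite case.} Now let $X = n$ be finite. Then $F(Y)$ is finitely generated, so by Proposition~\ref{propn:fg-free} (or Lemma~\ref{lemma:union}) $Y$ is finite. Write $Y = m$ with $m > n$. Since $n < m$, there is an injection $n \monic m$, and Lemma~\ref{lemma:monics} shows that $F$ turns it into an injective homomorphism $i \from F(n) \monic F(m)$. There is also a split epi $m \epic n$ in $\Set$, whose image under $F$ is a surjection $q \from F(m) \epic F(n)$. We may choose it so that $q \circ i = 1_{F(n)}$: take any retraction $r$ of $n \monic m$ and apply $F$.

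Consider the composite
\[
g = i \circ q \circ \ldots
\]
More simply, consider $f \circ i \from F(n) \to F(m)$, or better work directly with the surjection $F(m) \toby{q} F(n) \epicby{f} F(m)$. The composite $fq \from F(m) \epic F(m)$ is surjective, so by Proposition~\ref{propn:self-surj} it is an isomorphism. In particular $q$ is injective, and hence $q$ is an isomorphism $F(m) \iso F(n)$.

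To turn this into a contradiction, let $r \from m \epic n$ be the retraction with $q = F(r)$. Because $m > n$, $r$ identifies two distinct elements $y \neq y'$ of $m$. By nontriviality the unit $m \to F(m)$ is injective, so $\eta(y) \neq \eta(y')$ in $F(m)$, yet $q(\eta(y)) = \eta(r(y)) = \eta(r(y')) = q(\eta(y'))$. This contradicts the injectivity of $q$, and so $\mg{X} \geq \mg{Y}$.

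The main obstacle is the finite case, where everything depends on Proposition~\ref{propn:self-surj}. The one subtle point is arranging a self-surjection of a single $F(m)$ from the given surjection $F(n) \epic F(m)$ together with a non-injective map $F(m) \to F(n)$ induced from sets. Nontriviality is what converts ``$F(r)$ is injective'' into ``$r$ is injective''.
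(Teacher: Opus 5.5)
Your argument breaks down when $X = \emptyset$, i.e.\ $n = 0$. You assert that there is a split epi $m \epic n$, namely a retraction $r$ of an injection $n \monic m$. But when $n = 0 < m$ there is no function $m \to \emptyset$ at all, so $q = F(r)$ does not exist and no contradiction is reached.

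This case needs its own argument: you must show that a nontrivial $T$ admits no surjective homomorphism $F(\emptyset) \epic F(Y)$ with $Y$ nonempty. The paper does this before the main step:
\begin{itemize}
\item choose a nontrivial $T$-algebra $A$ and distinct $a, a' \in A$;
\item the constant functions $Y \to A$ with values $a$ and $a'$ are distinct because $Y \neq \emptyset$, so they induce distinct homomorphisms $g, g' \from F(Y) \to A$;
\item since $f$ is surjective, hence epic, $gf \neq g'f$;
\item but $F(\emptyset)$ is initial, so there is only one homomorphism $F(\emptyset) \to A$, a contradiction.
\end{itemize}
Adding this paragraph first, so that afterwards $n \geq 1$ and the retraction $r$ exists, closes the gap.

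For $n \geq 1$ your argument is correct. It differs from the paper's only in the order of composition. You apply Proposition~\ref{propn:self-surj} to $f \circ F(r) \from F(m) \epic F(m)$. The paper instead applies it to $F(p) \circ f \from F(X) \epic F(X)$ for a noninjective surjection $p \from Y \to X$.

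The paper's order has the small advantage of not needing $Y$ to be finite. Your order does need this, and you should cite Lemma~\ref{lemma:union} directly for it. Proposition~\ref{propn:fg-free} as stated only gives $F(Y) \iso F(m')$ for some finite $m'$, not that $Y$ itself is finite. The abandoned composite $g$ and the injection $i$ play no role and can be deleted.
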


\begin{proof}
When $X$ is infinite, this follows from Lemma~\ref{lemma:union}. Assume
that $X$ is finite, and suppose for a contradiction that $\mg{Y} >
\mg{X}$. Take a surjective homomorphism $f \from F(X) \epic F(Y)$. 

First note that $X$ is nonempty. For since $T$ is nontrivial, it has a
nontrivial algebra $A$, and since $Y$ is nonempty, we can find two distinct
homomorphisms $g, g'\from F(Y) \to A$. Since $f$ is epic, the homomorphisms
$gf, g'f \from F(X) \to A$ are also distinct. But $F(\emptyset)$ is the
initial algebra, so $X$ cannot be empty.

Since $\mg{Y} > \mg{X} > 0$, there is a noninjective surjection $p \from
Y \to X$. As $T$ is nontrivial, the homomorphism $F(p) \from F(Y) \to
F(X)$ is also noninjective, as well as surjective. Now the composite
\[
\begin{tikzcd}
F(X) \ar[r, "f", two heads] &F(Y) \ar[r, "F(p)", two heads] &F(X)
\end{tikzcd}
\]
is a surjective homomorphism $F(X) \epic F(X)$, with $X$ finite, and is
therefore an isomorphism by Proposition~\ref{propn:self-surj}. But this
implies that $f$ and $F(p)$ are isomorphisms, contradicting $F(p)$ being
noninjective.
\end{proof}

Proposition~\ref{propn:free-surj} immediately implies our second main
theorem: 

\begin{thm}[Dimension]
\label{thm:dim}
Let $T$ be a totally free nontrivial finitary monad on $\Set$. Let
$X$ and $Y$ be sets. If $F(X) \iso F(Y)$ as $T$-algebras then $X \iso Y$ as
sets. 
\end{thm}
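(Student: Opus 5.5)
The theorem is essentially a corollary of Proposition~\ref{propn:free-surj}, and I will deduce it by applying that proposition in both directions. Suppose $\phi \from F(X) \to F(Y)$ is an isomorphism of $T$-algebras. Then $\phi$ is in particular a surjective homomorphism $F(X) \epic F(Y)$, so Proposition~\ref{propn:free-surj} gives $\mg{X} \geq \mg{Y}$. Symmetrically, $\phi^{-1} \from F(Y) \to F(X)$ is a surjective homomorphism, so the same proposition gives $\mg{Y} \geq \mg{X}$.

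With both inequalities in hand, the Cantor--Schröder--Bernstein theorem yields $\mg{X} = \mg{Y}$, that is, $X \iso Y$ as sets. No choice is needed at this final step. Choice has already been assumed throughout the paper in any case, since it was used in Proposition~\ref{propn:self-surj} via Zorn's lemma.

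There is essentially no obstacle remaining, since all the real work sits inside Proposition~\ref{propn:free-surj}. For $X$ infinite, that proposition rests on Lemma~\ref{lemma:union} and cardinal arithmetic. For $X$ finite, it rests on the Zorn's lemma argument of Proposition~\ref{propn:self-surj}, which shows that surjective endomorphisms of $F(n)$ are isomorphisms. The only point to check here is that the hypotheses of Proposition~\ref{propn:free-surj} (totally free, nontrivial, finitary) are exactly those of the theorem, which they are.
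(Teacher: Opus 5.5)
Your proof is correct and is the paper's argument: the paper simply notes that Proposition~\ref{propn:free-surj} immediately implies the theorem. Applying that proposition to the isomorphism and to its inverse, and then invoking Cantor--Schr\"oder--Bernstein (or just the trichotomy of cardinals, since choice is assumed), is exactly that deduction.
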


When $T$ is totally free, Theorem~\ref{thm:dim} allows us to define
the \demph{dimension} of a $T$-algebra $A$ as the cardinality of the set
$X$, unique up to isomorphism, such that $A \iso F(X)$. Dimension is then a
complete invariant of $T$-algebras. 

\begin{example}
\label{eg:dim-semi}
Well-definedness of dimension is a weaker property than being
totally free. The theory of semilattices provides an example. Indeed,
we saw in Example~\ref{eg:min-semi} that it is not totally
free and that the corresponding monad is the finite powerset monad, from
which it follows by a cardinality argument that $F(X) \iso F(Y) \implies X
\iso Y$.
\end{example}

\bibliography{freerefs}

\end{document}

%% file: mac.tex
\usepackage{latexsym}
\usepackage{amsmath,amssymb}

\newcommand{\bref}[1]{(\ref{#1})}
\newcommand{\demphcolor}{\color[rgb]{0,0,0}}
\newcommand{\demph}[1]{\textbf{\textup{\demphcolor#1}}}

\newcommand{\fcat}[1]{\mathbf{#1}}
\newcommand{\scat}[1]{\mathbf{#1}}

\renewcommand{\epsilon}{\varepsilon}
\renewcommand{\phi}{\varphi}
\newcommand{\iso}{\cong}

\newcommand{\sub}{\subseteq}
\newcommand{\from}{\colon}

\newcommand{\textimplies}{\Rightarrow}

\newcommand{\N}{\mathbb{N}}

\renewcommand{\emptyset}{\varnothing}

\DeclareMathOperator{\colim}{colim} 
\DeclareMathOperator{\End}{End}
\DeclareMathOperator{\im}{im}
\newcommand{\op}{\mathrm{op}}

\newcommand{\CAT}{\fcat{CAT}}
\newcommand{\Set}{\fcat{Set}}

\newcommand{\mg}[1]{\lvert#1\rvert}

\newcommand{\toby}[1]{\xrightarrow{#1}}
\newcommand{\incl}{\hookrightarrow}
\newcommand{\monic}{\rightarrowtail}

\newcommand{\epic}{\twoheadrightarrow}
\newcommand{\epicby}[1]{\stackrel{#1}{\epic}}

\newcommand{\family}[1]{\mathbf{#1}}

%% file: thm.tex
\usepackage[amsmath,thmmarks]{ntheorem}

\newtheorem{thm}{Theorem}[section]
\newtheorem{propn}[thm]{Proposition}
\newtheorem{lemma}[thm]{Lemma}

\theorembodyfont{\normalfont}

\newtheorem{example}[thm]{Example}

\newtheorem{remark}[thm]{Remark}

\theoremstyle{nonumberplain}
\theoremsymbol{\ensuremath{\square}}
\qedsymbol{\ensuremath{\square}}

\newtheorem{proof}{Proof}

\newcommand{\theoremtobeproved}{}
\newtheorem{pfoftheorem}{Proof of \theoremtobeproved}